\newtheorem{theorem}{Theorem}
\newtheorem{lemma}[theorem]{Lemma}
\begin{document}

\title{Quantum knot mosaics and the growth constant}

\author[S. Oh]{Seungsang Oh}
\address{Department of Mathematics, Korea University, Seoul 02841, Korea}
\email{seungsang@korea.ac.kr}

\thanks{2010 Mathematics Subject Classification: 57M25, 57M27, 81P15, 81P68}
\thanks{This work was supported by the National Research Foundation of Korea(NRF) grant funded
by the Korea government(MSIP) (No. NRF-2014R1A2A1A11050999).}

\begin{abstract}
Lomonaco and Kauffman introduced a knot mosaic system to give a precise  and workable definition 
of a quantum knot system,
the states of which are called quantum knots.
This paper is inspired by an open question about the knot mosaic enumeration suggested by them.
A knot $n$--mosaic is an $n \times n$ array of 11 mosaic tiles 
representing a knot or a link diagram by adjoining properly that is called suitably connected.
The total number of knot $n$--mosaics is denoted by $D_n$
which is known to grow in a quadratic exponential rate.
In this paper, we show the existence of the knot mosaic constant 
$\delta = \lim_{n \rightarrow \infty} D_n^{\ \frac{1}{n^2}}$
and prove that 
$$4 \leq \delta \leq \frac{5+ \sqrt{13}}{2} \ (\approx 4.303).$$
\end{abstract}

\maketitle

\section{Preliminaries}

The quantum knot system was developed by Lomonaco and Kauffman 
to explain how to make quantum information versions
of mathematical structures in~\cite{LK1, LK2}.
They build a knot mosaic system to set the foundation for a quantum knot system,
based on the planar projections of knots and the Reidemeister moves.

Throughout this paper the term `knot' means either a knot or a link.
An example of a knot mosaic is shown in Figure~\ref{fig1}~(a).
Knot mosaics are constructed by using 11 mosaic tiles, listed in Figure~\ref{fig1}~(b).

\begin{figure}[h]
\includegraphics{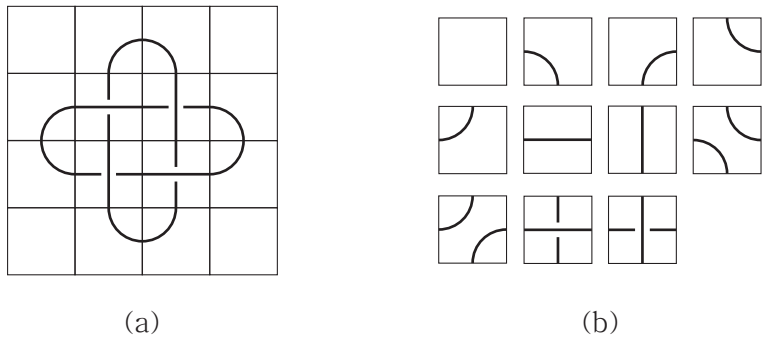}
\caption{An example of a knot mosaic and 11 mosaic tiles}
\label{fig1}
\end{figure}

This paper is inspired by an open question (9) about the knot mosaic enumeration proposed in~\cite{LK2}.
The enumeration of knot mosaic is not only an interesting problem in its own right 
but is also of considerable importance in the quantum knot theory.
Let $D_n$ denote the total number of knot $n$--mosaics.
The author, Hong, Lee and Lee announced several results on $D_n$ in the series of 
papers~\cite{HLLO1, HLLO2, LHLO, OHLL}.
Based upon the results, $D_n$ is known to grow in a quadratic exponential rate.
We consider the behavior of the growth rate.
The limit, if it exists, 
$$\delta = \lim_{n \rightarrow \infty} D_n^{\ \frac{1}{n^2}}$$
is called the {\em knot mosaic constant\/}.

\begin{theorem}\label{thm:growth}
The knot mosaic constant $\delta$ exists.
Furthermore, 
$$4 \leq \delta \leq \frac{5+ \sqrt{13}}{2} \ (\approx 4.303).$$
\end{theorem}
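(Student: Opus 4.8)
\emph{The plan.} I would establish existence first, then sandwich $\delta$ between $4$ and $\frac{5+\sqrt{13}}{2}$. Write $\alpha_n = D_n^{1/n^2}$. The decisive structural fact is that \emph{every knot mosaic has no connection point on its outer boundary}, so mosaics compose freely: placing $k^2$ copies of knot $n$--mosaics into a $k\times k$ array of blocks produces a suitably connected knot $(kn)$--mosaic, and embedding such a $kn\times kn$ block into the top-left corner of a larger $m\times m$ board while filling the remainder with empty tiles is again suitably connected. Hence $D_m \ge D_n^{k^2}$ for all $m\ge kn$. Fixing $n$ and taking $k=\lfloor m/n\rfloor$ gives $\alpha_m \ge \alpha_n^{(kn/m)^2}$, and since $(kn/m)^2\to 1$ as $m\to\infty$ we obtain $\liminf_m \alpha_m \ge \alpha_n$ for every $n$. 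As the $\alpha_n$ are bounded above (by the upper estimate below), this forces $\liminf_m\alpha_m \ge \sup_n\alpha_n \ge \limsup_m\alpha_m$, so the limit exists and $\delta=\sup_n\alpha_n$. This is a Fekete-type argument adapted to the quadratic exponent.

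\textbf{A reformulation.} The heart of both bounds is to recast $D_n$ as a weighted count of even subgraphs. Let $G$ be the grid graph whose $n^2$ vertices are the cells and whose edges are the internal adjacencies; a mosaic determines, for each internal edge, whether the two abutting tiles share a connection point there, i.e. a spanning subgraph $C$ of $G$. Inspecting the $11$ tiles shows that each carries connection points on $0$, $2$, or $4$ of its sides, so a tile fits a cell only when the number of selected incident edges is even; and given that parity the number of admissible tiles is exactly $1$ when the cell has $0$ or $2$ selected edges and exactly $4$ (two crossings and two double-arcs) when it has $4$. Therefore
\[
D_n \;=\; \sum_{C}\, 4^{\,|V_4(C)|},
\]
the sum ranging over all even spanning subgraphs $C$ of $G$ (every vertex of even degree), where $V_4(C)$ is the set of degree-$4$ vertices.

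\textbf{Lower bound.} It suffices to produce a single even subgraph with many degree-$4$ vertices. Filling the interior block of cells $\{2,\dots,n-1\}^2$ so that each of its cells has all four incident edges selected, and capping the resulting connection points along the outer frame by a fixed pattern of arc tiles (a width-one boundary of quarter-arcs pairing adjacent connection points, with even-degree corners), yields an even subgraph $C^{\ast}$ with $|V_4(C^{\ast})| = (n-2)^2$. That term alone gives $D_n \ge 4^{(n-2)^2}$, so $\alpha_n \ge 4^{(n-2)^2/n^2}\to 4$ and hence $\delta\ge 4$.

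\textbf{Upper bound and the main obstacle.} For the upper estimate I would evaluate $\sum_C 4^{|V_4(C)|}$ by a transfer matrix sweeping the $n$ rows of cells, its state recording which of the $n$ vertical rungs crossing the current frontier are selected and accumulating a factor $4$ for each degree-$4$ cell completed. Then $D_n \le c\,\Lambda_n^{\,n}$ with $\Lambda_n$ the spectral radius of this matrix, and the theorem reduces to $\limsup_n \Lambda_n^{1/n}\le \frac{5+\sqrt{13}}{2}$. The number $\frac{5+\sqrt{13}}{2}$ is the dominant eigenvalue of $\left(\begin{smallmatrix}4&1\\1&1\end{smallmatrix}\right)$, equivalently the larger root of $x^2-5x+3=0$, where the diagonal entry $4$ encodes the four fully-connected tiles and the off-diagonal $1$'s the transitions through lower-weight cells. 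The hard part is precisely to extract this sharp constant: the transfer matrix has dimension $2^n$, and one must show its per-cell growth is governed by the $2\times 2$ system above rather than by a weaker crude bound. I expect to do this through a state-recursion in the spirit of the state-matrix estimates of~\cite{HLLO1, HLLO2, LHLO, OHLL}, controlling the frontier evolution cell by cell so that each added cell multiplies the relevant weighted state-count by at most $\frac{5+\sqrt{13}}{2}$ in the limit.
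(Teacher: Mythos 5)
Your existence argument and your lower bound are fine. The block-tiling inequality $D_m \ge D_n^{\lfloor m/n\rfloor^2}$ (valid because knot mosaics have no boundary connection points and blank padding preserves suitable connectedness) is a legitimate one-variable substitute for the paper's route, which instead introduces rectangular $(m,n)$--mosaics and applies a two-variable Fekete lemma to $\log D_{m,n}$; both hinge on the same composability. Your reformulation $D_n = \sum_C 4^{|V_4(C)|}$ over even spanning subgraphs of the cell-adjacency grid is correct (degree $0$ or $2$ forces a unique tile, degree $4$ admits the two crossings and two double-arcs), and it gives $D_n \ge 4^{(n-2)^2}$, hence $\delta \ge 4$, by a more elementary and self-contained route than the paper, which extracts the same bound by zeroing out all entries of the state matrices of Theorem~\ref{thm:state} except the $(2^{n-2},2^{n-2})$--entry $4^{n-2}$.

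The upper bound, however, is a genuine gap, and it is the actual content of the theorem, since $\delta \le 4.4$ was already known from $(*)$. You correctly identify $\frac{5+\sqrt{13}}{2}$ as the Perron eigenvalue of $\left[\begin{smallmatrix} 1 & 1 \\ 1 & 4 \end{smallmatrix}\right]$ and reduce the problem to bounding the growth of an exponentially large transfer matrix, but then you only ``expect'' to show that each added cell multiplies the weighted state count by at most $\frac{5+\sqrt{13}}{2}$ in the limit. That assertion is precisely the theorem-to-be-proven, not a step toward it, and it is not a cell-local fact: crude per-cell weight estimates are exactly what produce constants like the $4.4$ in $(*)$, and nothing in your sketch explains why the $2^{n}$-dimensional frontier dynamics is governed by a $2\times 2$ system. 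The mechanism the paper uses is specific: starting from the recursion of Theorem~\ref{thm:state}, it dominates $X_k$ and $O_k$ entrywise by matrices $\overline{X}_k = \overline{O}_k$ whose modified recursion forces $N_k = \overline{X}_k + \overline{O}_k$ to satisfy $N_{k+1} = \left[\begin{smallmatrix} N_k & N_k \\ N_k & 4N_k \end{smallmatrix}\right]$, i.e.\ a Kronecker product with $\left[\begin{smallmatrix} 1 & 1 \\ 1 & 4 \end{smallmatrix}\right]$. Entrywise domination of nonnegative matrices survives taking powers, and the Kronecker structure makes the entry-sum exactly multiplicative, yielding $\overline{D}_{n+2} = 2^{n+1}\left\| \left[\begin{smallmatrix} 1 & 1 \\ 1 & 4 \end{smallmatrix}\right]^n \right\|^n$ and hence $\delta \le \overline{\delta} = \frac{5+\sqrt{13}}{2}$ by diagonalization. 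Your plan names the right constant but supplies no such domination or tensor structure; until you exhibit one (either from the even-subgraph transfer matrix you set up, or by importing and majorizing the recursion of Theorem~\ref{thm:state} as the paper does), the inequality $\delta \le \frac{5+\sqrt{13}}{2}$ remains unproved.
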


As a previous result,
lower and upper bounds on $D_n$ for $n \geq 3$ were established as follows in~\cite{HLLO1}:
\begin{equation}
\frac{2}{275}(9 \cdot 6^{n-2} + 1)^2 \cdot 2^{(n-3)^2} \ \leq \ D_n \
\leq \ \frac{2}{275}(9 \cdot 6^{n-2} + 1)^2 \cdot (4.4)^{(n-3)^2} \tag{$*$}
\end{equation}
These bounds suggested that $\delta$ lies between $2$ and $4.4$.

This paper is organized as follows.
In Section 2, we give precise definition of knot mosaics with a slight generalization
and previously known results about the enumeration of knot mosaics.
In Section 3, the existence of the knot mosaic constant $\delta$ is provided by applying
an extended version of Fekete's Lemma.
In Section 4, we rigorously find lower and upper bounds of $\delta$ with heavy reliance on
the main theorem of~\cite{OHLL}.

\section{Enumeration of knot mosaics}

We begin by presenting the basic notion of knot mosaics and
then introduce previously known results about the enumeration of knot mosaics. \\

\noindent {\bf Definition 1 }
For positive integers $m$ and $n$,
an {\em $(m,n)$--mosaic\/} is an $m \times n$ array $M=(M_{ij})$ of 11 mosaic tiles depicted in Figure~\ref{fig1}~(b). \\

This definition is a rectangular version of the definition of an {\em $n$--mosaic\/}
that is an $n \times n$ array of mosaic tiles.
Obviously the set of all $(m,n)$--mosaics has $11^{mn}$ elements.

A {\em connection point\/} of a mosaic tile is defined as the midpoint of a mosaic tile edge
that is also the endpoint of a curve drawn on the tile.
Then the first mosaic tile has zero, the next six tiles with exactly one curve inside have two,
and the last four tiles have four connect points.
A mosaic is called {\em suitably connected\/} if any pair of mosaic tiles
lying immediately next to each other in either the same row or the same column
have or do not have connection points simultaneously on their common edge. \\

\noindent {\bf Definition 2 }
A {\em knot $(m,n)$--mosaic\/} is a suitably connected $(m,n)$--mosaic
which has no connection point on the boundary edges.
$D_{m,n}$ denotes the total number of knot $(m,n)$--mosaics.
Note that $D_n = D_{n,n}$. \\

A knot $(m,n)$--mosaic represents a specific knot or link diagram.
A knot $(n,n)$--mosaic is simply specified by a {\em knot $n$--mosaic\/}.
Two examples of mosaics are provided in Figure~\ref{fig2}.

\begin{figure}[h]
\includegraphics{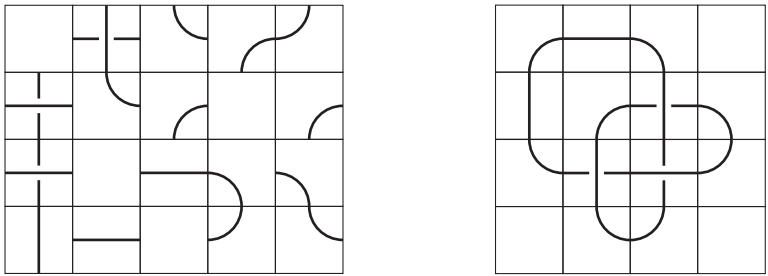}
\caption{Examples of a non-knot $(4,5)$--mosaic and the trefoil knot 4--mosaic}
\label{fig2}
\end{figure}

The problem of calculating $D_{m,n}$ is one of simplicity of definition but hiding difficulty of solution,
due to its non-Markovian processing.
$D_{1,n} = 1$ and $D_{2,n} = 2^{n-1}$ for a positive integer $n$, and $D_{3,3} = 22$.
The problem becomes increasingly difficult thereafter.
Refer~\cite{HLLO2} for a table of the precise values of $D_{m,n}$ for $m,n = 4,5,6$.

In~\cite{OHLL}, an algorithm producing the precise value of $D_{m,n}$ was proposed as follows:

\begin{theorem}[Oh--Hong--Lee--Lee] \label{thm:state}
For integers $m,n \geq 3$,
$$ D_{m,n} = 2 \, \| (X_{m-2}+O_{m-2})^{n-2} \| $$
where $X_{m-2}$ and $O_{m-2}$ are $2^{m-2} \times 2^{m-2}$ matrices recursively defined by
$$ X_{k+1} = 
\begin{bmatrix} X_k & O_k \\ O_k & X_k  \end{bmatrix}
\ \mbox{and } \
O_{k+1} = 
\begin{bmatrix} O_k & X_k \\ X_k & 4 \, O_k  \end{bmatrix} $$
for $k=1, \cdots, m \! - \! 3$, starting with
$X_1 = \begin{bmatrix} 1 & 1 \\ 1 & 1 \end{bmatrix}$ and \
$O_1 = \begin{bmatrix} 1 & 1 \\ 1 & 4 \end{bmatrix}$.
\end{theorem}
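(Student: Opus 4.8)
The plan is to set up a transfer matrix that builds a mosaic one column at a time and to read off $D_{m,n}$ as a weighted count of column sequences. I would process the $m\times n$ array column by column and record, as the \emph{state} at the interface between two adjacent columns, the pattern of connection points on the $m$ shared vertical edges. The whole computation rests on one elementary fact about the $11$ tiles, which I would isolate first: for a prescribed pattern of connection points on the four edges (top, bottom, left, right) of a single tile, the number of tiles realizing that pattern is $1$ when the pattern has $0$ or $2$ connection points, is $4$ when it has all $4$ connection points, and is $0$ when the number of connection points is odd. This is immediate from Figure~\ref{fig1}: the blank tile realizes the empty pattern, the six one-curve tiles realize the $\binom{4}{2}=6$ two-point patterns bijectively, and the four tiles with four connection points (the two double-arc tiles and the two crossings) all realize the unique four-point pattern. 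The odd patterns are forbidden, which is exactly the parity that keeps the boundary conditions consistent.

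With this in hand I would construct $X_k$ and $O_k$ as \emph{state matrices} of a vertical strip of height $k$. Index the rows and columns of a $2^k\times 2^k$ matrix by the left and right connection patterns $s,s'\in\{0,1\}^k$ of the strip, and let $X_k(s,s')$ (resp. $O_k(s,s')$) count the suitably connected stacks of $k$ tiles with these side patterns whose bottom edge is unconstrained and whose top edge carries no connection point (resp. carries a connection point). The recursion is then obtained by adjoining one tile on top of a height-$k$ strip: conditioning on the new tile's left bit $\lambda$, right bit $\rho$, and top bit $\tau$, and matching its bottom bit to the flag of the strip below, the counting fact forces the bottom bit uniquely by parity and supplies a scalar weight of $1$ or $4$. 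Carrying this out produces exactly the stated block recursion; in particular the weight $4$ in the lower-right block of $O_{k+1}$ is precisely the four tiles realizing the all-connection pattern, and the base matrices $X_1,O_1$ are the one-tile counts with the bottom edge summed out. Verifying the four blocks of each of $X_{k+1}$ and $O_{k+1}$ against the four choices of $(\lambda,\rho)$ is the engine of the argument, and I expect it to be clean once the counting fact is in place.

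Finally I would assemble the columns. Placing strips side by side and matching interfaces turns the count into a matrix product, and summing over the interior interface states corresponds to taking the entrywise sum $\|\cdot\|$. It remains to fold in the genuine boundary conditions --- no connection point on the outer boundary of the $m\times n$ array and on the top and bottom rows --- which is where the exponent $n-2$, the reduction of the state space from the full $m$ rows to the $m-2$ interior rows (hence size $2^{m-2}$), and the global factor $2$ all originate. Concretely, the two outermost columns and the two boundary rows act as fixed initial and terminal data; a direct finite computation should show that their contribution collapses the boundary interfaces to the all-ones vectors, producing $\|(X_{m-2}+O_{m-2})^{n-2}\|$, together with a single multiplicative factor $2$. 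I expect this boundary bookkeeping to be the main obstacle: unlike the interior recursion it is not self-similar, and one must check carefully that the forced frame contributes exactly the factor $2$ and introduces no spurious states. I would pin the pattern down on the smallest cases, confirming $D_{3,3}=2\,\|X_1+O_1\|=2\cdot 11=22$ and $D_{3,4}=2\,\|(X_1+O_1)^2\|=2\cdot 65=130$, before arguing the general reduction.
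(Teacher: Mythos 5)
Your proposal is correct and coincides with the proof in the literature: the present paper states Theorem~\ref{thm:state} without proof, quoting it from \cite{OHLL}, and the argument there is precisely the state matrix recursion you describe --- the parity count of tiles ($1$ for edge patterns with $0$ or $2$ connection points, $4$ for the all-four pattern, $0$ for odd parity), the block recursion obtained by stacking one tile onto a growing strip with the $X/O$ flag recording the free edge bit, and the reduction to the $(m-2)\times(n-2)$ interior in which the factor $2$ is the two-fold rule of \cite{HLLO1} (every suitably connected inner mosaic extends to exactly two knot $(m,n)$--mosaics, established by propagating one free lateral bit around the boundary frame, whose global consistency follows from the even total parity of the interface --- exactly the mechanism you anticipate as the remaining bookkeeping). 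Your base cases and the consistency checks $D_{3,3}=2\,\|X_1+O_1\|=22$ and $D_{3,4}=2\,\|(X_1+O_1)^2\|=130$ are all correct.
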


Here $\| N \|$ denotes the sum of all entries of a matrix $N$.
Due to Theorem~\ref{thm:state}, we get Table~\ref{tab1} of the precise values of $D_n$ 
and approximated values of $D_n^{\ \frac{1}{n^2}}$.
This observation that $D_n^{\ \frac{1}{n^2}}$ steadily increases is of considerable significance. \\

\begin{table}[h]
\begin{tabular}{crc}      \hline \hline
\ \ $n$ \ \ & $D_n$ & \ \ $D_n^{\ \frac{1}{n^2}}$ \ \ \\    \hline
1 & 1 & 1.000 \\
2 & 2 & 1.189 \\
3 & 22 & 1.410 \\
4 & 2594 & 1.634 \\
5 & 4183954 & 1.840 \\
6 & 101393411126 & 2.022 \\
7 & 38572794946976686 & 2.180 \\
8 & 234855052870954505606714 & 2.318 \\
9 & 23054099362200397056093750003442 & 2.439 \\  \hline \hline
\end{tabular}
\vspace{2mm}
\caption{List of $D_n$ and approximated values of $D_n^{\ \frac{1}{n^2}}$}
\label{tab1}
\end{table}

\section{Existence of the knot mosaic constant}

In this section, we prove the existence of the knot mosaic constant $\delta$
that is the first part of Theorem~\ref{thm:growth}.
It is due to the two-variable version of Fekete's Lemma
introduced in~\cite{OhD1}.

\begin{lemma}[Two-variable Fekete's Lemma] \label{lem:Fekete}
A positive valued double sequence $\{ a(m,n) \}_{m, \, n \in \, \mathbb{N}}$ is
superadditive in both indices as
$a(m_1 \! + \! m_2,n) \geq a(m_1,n) + a(m_2,n)$ and $a(m,n_1 \! + \! n_2) \geq a(m,n_1) + a(m,n_2)$.
Then
$$ \lim_{n \rightarrow \infty} \frac{a(n,n)}{n^2} \ = \ \sup_{n \in \, \mathbb{N}} \frac{a(n,n)}{n^2}$$
if the supremum exists.
\end{lemma}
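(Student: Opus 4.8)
The plan is to run the classical division-with-remainder argument underlying Fekete's Lemma, but to exploit superadditivity in \emph{both} coordinates at once so that the quadratic normalization by $n^2$ emerges. The key preliminary observation I would record is a discrete scaling inequality: iterating the first hypothesis gives $a(qm,n) \geq q\, a(m,n)$ for every positive integer $q$, and iterating the second gives $a(m,qn) \geq q\, a(m,n)$; combining these yields $a(qm,qn) \geq q^2\, a(m,n)$. Thus the normalized quantity $a(m,n)/(mn)$ does not decrease under simultaneous scaling of both indices, and in particular $a(kn,kn)/(kn)^2 \geq a(n,n)/n^2$.

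Write $c_n = a(n,n)/n^2$ and $L = \sup_{n} c_n$. The inequality $c_n \leq L$ holds for every $n$ by definition, so $\limsup_{n} c_n \leq L$ and it remains only to prove $\liminf_{n} c_n \geq L$. Fix an arbitrary index $m$. For $n$ large, write $n = qm + r$ with $q \geq 1$ and $0 \leq r < m$. By positivity of the sequence, superadditivity in the first index gives $a(n,n) \geq a(qm,n) \geq q\, a(m,n)$, and superadditivity in the second index gives $a(m,n) \geq a(m,qm) \geq q\, a(m,m)$. Hence $a(n,n) \geq q^2\, a(m,m)$, which rearranges to
$$ c_n \;=\; \frac{a(n,n)}{n^2} \;\geq\; \Bigl(\frac{qm}{n}\Bigr)^{\!2}\, \frac{a(m,m)}{m^2} \;=\; \Bigl(\frac{qm}{n}\Bigr)^{\!2} c_m. $$

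Since $qm = n - r > n - m$, the factor $qm/n$ exceeds $1 - m/n$, which tends to $1$ as $n \to \infty$ with $m$ held fixed. Taking the $\liminf$ over $n$ therefore yields $\liminf_{n} c_n \geq c_m$. As $m$ was arbitrary, $\liminf_{n} c_n \geq \sup_{m} c_m = L$, and combined with the reverse bound this gives $\lim_{n} c_n = L = \sup_{n} c_n$, as claimed. The same computation also covers the degenerate case $L = +\infty$, so the finiteness of the supremum assumed in the statement is used only to identify the common finite value of the limit and the supremum.

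I expect the one genuinely non-routine point to be the passage from \emph{linear} to \emph{quadratic} normalization: the classical one-variable lemma applied to the diagonal $b_n = a(n,n)$ only shows that $b_n/n$ converges, since $b_n$ is readily seen to be superadditive. Recovering the correct power $n^2$ forces one to feed the block index $m$ into both coordinates of $a(\,\cdot\,,\,\cdot\,)$, producing the crucial factor $q^2$ rather than $q$; verifying that the two applications of superadditivity may be chained in this way, and that the discarded remainder blocks are harmless by positivity, is the heart of the argument. The remaining estimates on $qm/n$ are entirely standard.
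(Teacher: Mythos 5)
Your proposal is correct and follows essentially the same argument as the paper: both fix a base index, use the division algorithm $n = qm + r$, chain superadditivity in the first and then the second coordinate to get $a(n,n) \geq q^2 a(m,m)$ (discarding remainder terms by positivity), and conclude via $qm/n \to 1$. The only cosmetic difference is that you phrase the final step with $\liminf$ and a supremum over $m$, whereas the paper uses an arbitrary $B < \sup$ and lets $B$ tend to the supremum.
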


\begin{proof}
We merely follows the proof of Fekete's Lemma.
Let $S = \sup_n \frac{a(n,n)}{n^2}$ and let $B$ be any number less than $S$.
Choose $k \geq 1$ such that $ B < \frac{a(k,k)}{k^2}$.
For $n > k$, there are integers $p_n$ and $q_n$ such that $n = p_n k + q_n$ and $0 \leq q_n< k$
by the division algorithm.
Applying the definition of superadditivity many times in both indices, we obtain
\begin{equation*} \begin{aligned}
a(n,n) & \geq \ p_n a(k, p_n k \! + \! q_n) + a(q_n, p_n k \! + \! q_n)  \\
          & \geq p_n^2 a(k,k) + p_n a(k,q_n) + p_n a(q_n, k) + a(q_n, q_n).
\end{aligned} \end{equation*}
So,
$$ \frac{a(n,n)}{n^2} \ \geq \ \Big(\frac{p_n k}{n}\Big)^2 \, \frac{a(k,k)}{k^2}.$$
Since $\frac{p_n k}{n} \rightarrow 1$ as $n \rightarrow \infty$,
we have
$$ B < \frac{a(k,k)}{k^2} \leq \lim_{n \rightarrow \infty} \frac{a(n,n)}{n^2} \leq S.$$
Finally, let $B$ go to $S$ and we obtain
$$ \lim_{n \rightarrow \infty} \frac{a(n,n)}{n^2} = S = \sup_{n \in \, \mathbb{N}} \frac{a(n,n)}{n^2}.$$
\end{proof}

To show the existence of the limit of $D_n^{\ \frac{1}{n^2}} = (D_{n,n})^{\frac{1}{n^2}}$,
we only need to verify that $D_{m,n}$ satisfies the supermultiplicative property in both indices as
$D_{m_1+m_2,n} \geq D_{m_1,n} \cdot D_{m_2,n}$ and $D_{m,n_1+n_2} \geq D_{m,n_1} \cdot D_{m,n_2}$.
This asserts that $\log D_{m,n}$ is a superadditive function in both indices.
Furthermore the inequality $(*)$ guarantees the existence of $\sup_n \frac{\log D_{n,n}}{n^2} \leq \log 4.4$.
Then we can apply Lemma~\ref{lem:Fekete}.

The supermultiplicative inequalities for $D_{m,n}$ are obvious 
because we can get a new knot $(m,n_1 \! + \! n_2)$--mosaic
by simply adjoining two knot $(m,n_1)$-- and $(m,n_2)$--mosaics
as drawn in Figure~\ref{fig3}.

\begin{figure}[h]
\includegraphics{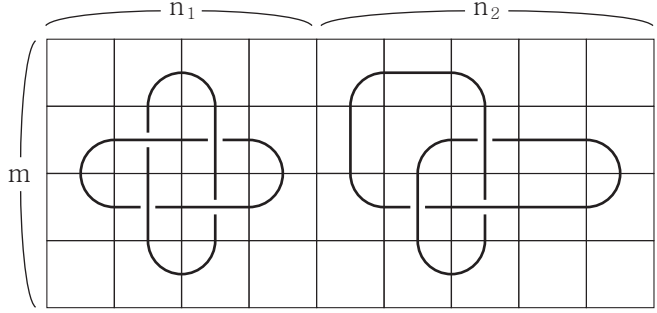}
\caption{$D_{m,n_1+n_2} \geq D_{m,n_1} \cdot D_{m,n_2}$}
\label{fig3}
\end{figure}

\section{Bounds of the knot mosaic constant}

To complete the proof of Theorem~\ref{thm:growth}, 
we find lower and upper bounds of the knot mosaic constant $\delta$.
This procedure heavily relies on Theorem~\ref{thm:state}.

Define the $2^{n-2} \times 2^{n-2}$ matrices $\underline{X}_{n-2}$ and $\underline{O}_{n-2}$ by the recurrence relations
$$ \underline{X}_{k+1} = 
\begin{bmatrix} \mathbb{O} & \mathbb{O} \\ 
\mathbb{O} & \mathbb{O}  \end{bmatrix}
\ \mbox{and } \
\underline{O}_{k+1} = 
\begin{bmatrix} \mathbb{O} & \mathbb{O} \\ 
\mathbb{O} & 4 \, \underline{O}_k  \end{bmatrix} $$
for $k=1, \cdots, n-3$, starting with
$\underline{X}_1 = \begin{bmatrix} 0 & 0 \\ 0 & 0 \end{bmatrix}$ and \
$\underline{O}_1 = \begin{bmatrix} 0 & 0 \\ 0 & 4 \end{bmatrix}$. \\
Here $\mathbb{O}$ indicates the square zero-matrix with appropriate size.
Indeed $(2,2)$--quadrant $4 \, O_k$ of $O_{k+1}$ in Theorem~\ref{thm:state}
has effect on the dominant asymptotic behavior.
Then, 
$$ \underline{D}_n  \mathrel{\mathop:}= 2 \, \| (\underline{X}_{n-2}+\underline{O}_{n-2})^{n-2} \| \ \leq \ D_n. $$
Since the matrix $\underline{X}_{n-2}+\underline{O}_{n-2}$ has all entries 0, 
except that its $(2^{n-2},2^{n-2})$--entry is exactly $4^{n-2}$,
$$ \underline{\delta} \mathrel{\mathop:}= \lim_{n \rightarrow \infty} \underline{D}_n^{\ \frac{1}{n^2}} = 4.$$

Furthermore, define the matrices  $\overline{X}_{n-2}$ and $\overline{O}_{n-2}$ by 
$$ \overline{X}_{k+1} = 
\begin{bmatrix} \overline{X}_k & \overline{O}_k \\ 
\overline{O}_k & 4 \, \overline{X}_k  \end{bmatrix}
\ \mbox{and } \
\overline{O}_{k+1} = 
\begin{bmatrix} \overline{O}_k & \overline{X}_k \\ 
\overline{X}_k & 4 \, \overline{O}_k  \end{bmatrix} $$
for $k=1, \cdots, n-3$, starting with
$\overline{X}_1 = \begin{bmatrix} 1 & 1 \\ 1 & 4 \end{bmatrix}$ and \
$\overline{O}_1 = \begin{bmatrix} 1 & 1 \\ 1 & 4 \end{bmatrix}$. 
Or, we may start at $k=0$ with $\overline{X}_0 = \begin{bmatrix} 1 \end{bmatrix}$ and \
$\overline{O}_0 = \begin{bmatrix} 1 \end{bmatrix}$. 
Then, 
$$ \overline{D}_n \mathrel{\mathop:}= 2 \, \| (\overline{X}_{n-2}+\overline{O}_{n-2})^{n-2} \| \ \geq \ D_n.$$

Considering the matrix $N_k = \overline{X}_k + \overline{O}_k$,
\begin{equation*} \begin{aligned}
\| (N_{k+1})^s\| & = \left\| \begin{bmatrix} N_k & N_k \\ N_k & 4 \, N_k \end{bmatrix}^s \right\| 
= \left\| \begin{bmatrix} a_s (N_k)^s & b_s (N_k)^s \\ c_s (N_k)^s & d_s (N_k)^s \end{bmatrix} \right\| \\
& = (a_s + b_s + c_s + d_s) \| (N_k)^s \| 
=  \left\| \begin{bmatrix} 1 & 1 \\ 1 & 4 \end{bmatrix}^s \right\| \cdot \|(N_k)^s\|
\end{aligned} \end{equation*}
where $\begin{bmatrix} a_s & b_s \\ c_s & d_s \end{bmatrix} = 
\begin{bmatrix} 1 & 1 \\ 1 & 4 \end{bmatrix}^s$ for a positive integer $s$.
Then,
\begin{equation*} \begin{aligned}
\overline{D}_{n+2} & = 2 \| (N_n)^n \| = 2 \left\| \begin{bmatrix} 1 & 1 \\ 1 & 4 \end{bmatrix}^n \right\| 
\cdot \left\| (N_{n-1})^n \right\| \\
& = 2 \left\| \begin{bmatrix} 1 & 1 \\ 1 & 4 \end{bmatrix}^n \right\|^n \cdot \| (N_0)^n \| 
= 2^{n+1} \left\| \begin{bmatrix} 1 & 1 \\ 1 & 4 \end{bmatrix}^n \right\|^n.
\end{aligned} \end{equation*}

Owing to the equality $\begin{bmatrix} 1 & 1 \\ 1 & 4 \end{bmatrix} =
P \cdot \begin{bmatrix} \lambda_1 & 0 \\ 0 & \lambda_2 \end{bmatrix} \cdot P^{-1}$
where $\lambda_1$ and $\lambda_2$ are the eigenvalues $\frac{5 - \sqrt{13}}{2}$ and $\frac{5 + \sqrt{13}}{2}$, 
respectively, of the matrix $\begin{bmatrix} 1 & 1 \\ 1 & 4 \end{bmatrix}$
and $P$ is the diagonalizing matrix,
\begin{equation*} \begin{aligned}
\overline{\delta} \mathrel{\mathop:}= & \lim_{n \rightarrow \infty} \overline{D}_n^{\ \frac{1}{n^2}} 
= \lim_{n \rightarrow \infty} \overline{D}_{n+2}^{\ \frac{1}{n^2}} 
= \lim_{n \rightarrow \infty} \left\| \begin{bmatrix} 1 & 1 \\ 1 & 4 \end{bmatrix}^n \right\|^{\ \frac{1}{n}} \\
=  & \lim_{n \rightarrow \infty} \left\| P \cdot \begin{bmatrix} 
\lambda_1^n & 0 \\ 0 & \lambda_2^n \end{bmatrix} \cdot P^{-1} \right\|^{\ \frac{1}{n}} 
= \lambda_2 = \frac{5 + \sqrt{13}}{2}.
\end{aligned} \end{equation*}

As a conclusion, the inequality $ \underline{D}_n \, \leq \, D_n \, \leq \, \overline{D}_n$
guarantees the following bounds of the knot mosaic constant as desired,
$$\underline{\delta} \, \leq \, \delta \, \leq \, \overline{\delta}.$$

\end{document}